\newtheorem{theorem}{Theorem}[section]
\newcommand{\im}{\operatorname{im}}
\newcommand{\Span}{\operatorname{span}}
\title{Reduced Basis Methods: Success, Limitations and Future Challenges
\thanks{This work has been supported by the German Federal Ministry of Education and Research (BMBF) under contract number 05M13PMA.}}
\author{Mario Ohlberger
	\thanks{Institute for Computational and Applied Mathematics \& Center for Nonlinear Science, University of M\"unster, Einsteinstr.\ 62, 48149 M\"unster, Germany, {\tt mario.ohlberger@uni-muenster.de}}
        \and Stephan Rave
	\thanks{Institute for Computational and Applied Mathematics, University of M\"unster, Einsteinstrasse 62, 48149 M\"unster, Germany, {\tt stephan.rave@uni-muenster.de}}}
\begin{document}

\maketitle

\begin{abstract}
Parametric model order reduction using reduced basis methods can be
an effective tool for obtaining quickly solvable reduced order models
of parametrized partial differential equation problems.
With speedups that can reach several orders of magnitude, reduced
basis methods enable high fidelity real-time simulations of complex
systems and dramatically reduce the computational costs in
many-query applications.
In this contribution we analyze the methodology,
mainly focussing on the theoretical aspects of the approach.
In particular we discuss what is known about the convergence
properties of these methods: when they succeed and when they are
bound to fail.
Moreover, we highlight some recent approaches employing nonlinear 
approximation techniques which aim to overcome the current limitations
of reduced basis methods.

\bigskip

\textbf{Key words.}
model order reduction,
reduced basis method,
approximation theory,
partial differential equations.

\bigskip

\textbf{AMS subject classifications.} 
41A45, 41A46, 41A65, 65M60, 65N30.

\end{abstract}

\pagestyle{myheadings}
\thispagestyle{plain}
\markboth{M. OHLBERGER AND S. RAVE}{REDUCED BASIS METHODS}

\section{Introduction}

Over the last decade, reduced order modeling has become an integral part in many
numerical simulation workflows.
The reduced basis (RB) method is a popular choice for reducing the computational
complexity of parametrized partial differential equation (PDE) problems for either
real-time scenarios, where the solution of the problem needs to be known very
quickly under limited resources for a previously unknown parameter, or
multi-query scenarios, where the problem has to be solved repeatedly for many
different parameters.
The reduced order models obtained from RB methods during a computationally
intensive \emph{offline} phase typically involve approximation spaces of only
a few hundred or even less dimensions, leading to vast savings in computation
time when these models are solved during the so-called \emph{online} phase.

In this contribution, we first introduce the parametric model order
reduction problem in an abstract setting (Section~\ref{sec:abstract_problem}) and then give a short but
complete description of the RB method for the prototypic problem class of
linear, coercive, affinely decomposed problems, including a proof on the (sub-)exponential convergence 
of the approach (Section~\ref{sec:affine_coercive}).
Section~\ref{sec:extensions} contains
some pointers as to how the RB framework can be extended to other problem
classes.
In our presentation we will mostly focus on theoretical aspects of RB methods
and largely leave out any discussion of implementational issues and application
problems.
For more details on these aspects and RB methods in general, we refer to the
recent monographs~\cite{QuarteroniManzoniEtAl2016, HesthavenRozzaEtAl2016},
the tutorial~\cite{Ha14}, and the references therein.

RB and related methods can only succeed for problems which can be
approximated well using linear approximation spaces.
As we will see in Section~\ref{sec:limits}, this is typically not the case for advection driven
phenomena.
It is, therefore, inevitable to include nonlinear approximation techniques into
the RB framework to successfully handle this type of problem.
While this clearly poses a significant challenge for the methodology, first attempts
have been made towards this direction, some of which we will discuss in Section
5.2.

\section{Abstract problem formulation}
\label{sec:abstract_problem}
We are interested in solving parametric problems given by a
solution map 
\begin{equation*}
\Phi: \mathcal{P} \longrightarrow V
\end{equation*}
from a compact parameter
domain $\mathcal{P} \subset \mathbb{R}^P$ into some solution state space $V$,
which we will always assume to be a Hilbert space.
In all problems we consider, $\Phi(\mu)$ will be given as the solution of some
parametric partial differential equation.
Moreover, let $s: V \to \mathbb{R}^S$ be an $S$-dimensional output
functional which assigns to a state vector $v \in V$ the $S$ quantities of
interest $s(v)$. 
Note that the composition
\begin{equation*}
	s \circ \Phi: \mathcal{P} \longrightarrow V \longrightarrow
	\mathbb{R}^S,
\end{equation*}
which assigns to each parameter $\mu \in \mathcal{P}$ the quantities of interest
associated with the corresponding solution $\Phi(\mu)$, is a mapping between
low-dimensional spaces.
Assuming that both $\Phi$ and $s$ are sufficiently smooth, it is, therefore,
reasonable to assume that there exist quickly evaluable reduced order
models which offer a good approximation of $s \circ \Phi$.

Reduced basis methods are based on the idea of constructing state space
approximation spaces $V_N$ of low dimension $N$ for the so-called solution
manifold $\im(\Phi)$, and using the structure of the underlying equations defining
$\Phi$ to compute an approximation $\Phi_N(\mu) \in V_N$ of $\Phi(\mu)$.
By orthogonally projecting onto $V$, we can always assume that $V_N \subseteq V$
without diminishing the approximation quality.\footnote{This is not true for
	arbitrary Banach spaces $V$. E.g.\ consider the set of sequences in
	$c_0(\mathbb{N}) \subset l^\infty(\mathbb{N})$ which only assume the
	values 0 and 1. Each such function has $\|\cdot\|_\infty$ distance
	$1/2$ to the sequence with constant value $1/2$, but there is no
	finite-dimensional subspace of $c_0(\mathbb{N})$ with equal or lower
	best-approximation error.}
We then have the reduced approximation
\begin{equation*}
	s \circ \Phi_N: \mathcal{P} \longrightarrow V_N \longrightarrow
	\mathbb{R}^S
\end{equation*}
for the parameter-output mapping $s \circ \Phi$.

Given this abstract setup, the following questions, which will guide us through
the reminder of this article, are immediate:
\begin{enumerate}
\item Do there exists good approximation spaces $V_N$?
\item How to find a good approximation space $V_N$?
\item How to construct a quickly evaluable reduced solution map $\Phi_N$?
\item How to control the approximation errors $\Phi(\mu) - \Phi_N(\mu)$,
	$s(\Phi(\mu)) - s(\Phi_N(\mu))$?
\end{enumerate}
Assuming a positive answer to question 1, a multitude of answers have been given to
questions 2, 3 and 4 by now --- some of which we will discuss in the following sections ---
which yield more than satisfying results, both in theory and practice.
In particular, respecting the structure of the underlying equations defining
$\Phi$ allows for tight a posteriori estimates controlling the reduction error,
which in turn can be used to generate near-optimal approximation spaces $V_N$.
This is probably the greatest advantage over a straightforward interpolation of $s
\circ \Phi$, for which only crude error estimates exist and, especially for $P >
1$, the optimal selection of the interpolation points is unclear.

Moreover, we will see that, in fact, question 1 can be answered positively for large
classes of relevant problems (Section~\ref{sec:existence_of_approximation_spaces}).
Section~\ref{sec:limits} will be concerned with the case when no good linear
approximation spaces $V_N$ exist.

\section{An ideal world: coercive, affinely decomposed problems}
\label{sec:affine_coercive}

In this section we study the basic problem class of linear, coercive, affinely
decomposed problems, to which the reduced basis methodology is ideally fitted.
RB methods for other problem classes can be usually seen as extensions of the
ideas presented here.

We call a parametric problem linear, coercive if $\Phi(\mu)$ is given as the
solution $u_\mu$ of a variational problem
\begin{equation}\label{eq:coercive_affine_problem}
	a_\mu(u_\mu, v) = f(v) \qquad \forall v \in V,
\end{equation}
where, for each $\mu \in \mathcal{P}$, $a_\mu: V \times V \to \mathbb{R}$
is a continuous bilinear form on $V$ such that
$a_\mu(v, v) \geq C_{a_\mu} \|v\|^2$
with a strictly positive constant $C_{a_\mu}$, $f \in V^\prime$, and, in addition, the output
$s: V \to \mathbb{R}^S$ is a continuous linear map.
Continuity and coercivity of $a_\mu$ ensure the well-posedness of (\ref{eq:coercive_affine_problem}).
(A typical example would be, where $a_\mu$ is the variational form of an
elliptic partial differential operator on an appropriate Sobolev space and $f$
is the $L^2$-inner product with a given source term.)

We call the problem affinely decomposed if there are continuous mappings
$\theta_q: \mathcal{P} \to \mathbb{R}$ and continuous bilinear forms
$a_q: V \times V \to \mathbb{R}$ ($1 \leq q \leq Q$) such that
\begin{equation}\label{eq:affine_decomposition}
	a_\mu = \sum_{q=1}^{Q} \theta_q(\mu) a_q \qquad\forall \mu\in \mathcal{P}.
\end{equation}
Even though this assumption seems artificial at first sight, a large array of
real-world problems admit such an affine decomposition
(e.g.\ for diffusion equations, an affinely decomposed diffusivity tensor gives
rise to an affinely decomposed $a_\mu$).
In the following subsections we will give answers to the questions raised in
Section~\ref{sec:abstract_problem} for this class of problems.

\subsection{Existence of good approximation spaces}
\label{sec:existence_of_approximation_spaces}
The goal of RB methods is to find linear approximation spaces $V_N$ for which the
worst best-approximation error for elements of $\im(\Phi)$,
\begin{equation*}
d_{V_N}(\im(\Phi)):= \sup_{v \in \im(\Phi)}\inf_{v_N \in V_N} \|v - v_N\|,
\end{equation*}
is near the theoretical optimum
\begin{equation*}
d_N(\im(\Phi)) := \inf_{\substack{W \subseteq V \text{\ lin.\ subsp.} \\\dim W
		\leq N}}\,\sup_{v \in \im(\Phi)}\inf_{w \in W} \|v - w\|,
\end{equation*}
called the Kolmogorov $N$-width of $\im(\Phi)$.
Note that, since $V$ is a Hilbert space, the last infimum in both definitions
can be replaced by the norm of the defect of the orthogonal projection onto
$V_N$ (resp.\ $W$).
Moreover, an optimal
$N$-dimensional subspace $V_N$, for which $d_{V_N}(\im(\Phi)) = d_N(\im(\Phi))$,
always exists~\cite[Theorem II.2.3]{Pinkus1985}.
Nevertheless, the definition of the Kologorov $N$-width remains complex, and little is
known about the exact asymptotic behaviour of $d_N(\im(\Phi))$ in general.

For affinely decomposed problems, however, the $N$-widths always fall subexponentially fast
due to the holomorphy of the solution map $\Phi$.
While certainly known to experts, we believe a complete proof has never appeared
in the literature, so we provide it here:

\begin{theorem}
If $a_\mu$ is affinely decomposed according to (\ref{eq:affine_decomposition}),
then the Kolmogorov $N$-widths of the solution manifold of problem
(\ref{eq:coercive_affine_problem}) satisfy
\begin{equation*}
	d_N(\im(\Phi)) \leq C e^{-cN^{1/Q}},
\end{equation*}
with fixed constants $C, c > 0$.
\end{theorem}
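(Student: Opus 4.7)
The plan is to exploit the affine decomposition to lift $\Phi$ to a map that is holomorphic in $Q$ complex variables, and then to use finite Taylor expansions around a covering of the (compact) parameter image as the linear approximation spaces. Holomorphy yields exponential decay of Taylor tails in the truncation order $n$, while the corresponding coefficient space has dimension only polynomial in $n$ (of order $n^Q$); balancing these two rates against each other gives the announced estimate $e^{-cN^{1/Q}}$.

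Concretely, I would set $\theta := (\theta_1,\ldots,\theta_Q) : \mathcal{P} \to \mathbb{R}^Q$ and define the entire operator-valued map $B : \mathbb{C}^Q \to \mathcal{L}(V,V')$ by $B(z) := \sum_{q=1}^Q z_q a_q$. For $\mu \in \mathcal{P}$ the operator $B(\theta(\mu))$ is coercive, hence boundedly invertible. A Neumann-series perturbation argument, combined with compactness of $K := \theta(\mathcal{P}) \subset \mathbb{R}^Q$, produces uniform constants $\delta > 0$ and $M < \infty$ such that $B(z)$ remains invertible with $\|B(z)^{-1}\| \leq M$ on the complex $\delta$-neighbourhood $U_\delta$ of $K$. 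Consequently $u(z) := B(z)^{-1}f$ is a $V$-valued holomorphic map on $U_\delta$ with $\|u(z)\| \leq M\|f\|_{V'} =: M'$, and $\Phi(\mu) = u(\theta(\mu))$. For any $z_0 \in K$, multivariate Cauchy estimates on the polydisc of polyradius $(\delta,\ldots,\delta)$ centred at $z_0$ yield $\|\partial^\alpha u(z_0)/\alpha!\| \leq M' \delta^{-|\alpha|}$, and therefore on the smaller polydisc $D(z_0,\delta/2)$ the order-$n$ Taylor tail is geometrically small:
\begin{equation*}
\Bigl\| u(z) - \sum_{|\alpha| < n} \tfrac{1}{\alpha!}\,\partial^\alpha u(z_0)\,(z - z_0)^\alpha \Bigr\| \;\leq\; M' \sum_{|\alpha| \geq n} 2^{-|\alpha|} \;\leq\; C_Q M' \rho^n
\end{equation*}
for some $\rho \in (0,1)$.

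The truncated sum lies in the span of the at most $\binom{n+Q-1}{Q} = O(n^Q)$ vectors $\{\partial^\alpha u(z_0)\}_{|\alpha| < n} \subset V$. Covering the compact set $K$ by finitely many, say $J$, polydiscs of radius $\delta/2$ with centres $z_0^{(1)},\ldots,z_0^{(J)} \in K$ and taking the sum of the associated Taylor-coefficient spans yields a subspace $V_N \subseteq V$ of dimension $N \leq J\binom{n+Q-1}{Q}$ whose worst best-approximation error over $\im(\Phi)$ is bounded by $C_Q M' \rho^n$. Inverting $N \leq C_1 n^Q$ gives $n \geq c_1 N^{1/Q}$ and hence $d_N(\im(\Phi)) \leq C e^{-c N^{1/Q}}$. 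The main obstacle is the very first step: a Neumann series around a single point $\theta(\mu_0)$ only secures invertibility on a disc whose radius is controlled by $1/C_{a_{\mu_0}}$, so in order to obtain a single $\delta$ valid on all of $U_\delta$ one must invoke a uniform positive lower bound for $\mu \mapsto C_{a_\mu}$ (coming, for instance, from continuity together with compactness of $\mathcal{P}$), as well as a uniform upper bound on the operator norms of the $a_q$. Once this uniform holomorphic extension is in place, the Cauchy estimates, dimension count, and finite cover are routine.
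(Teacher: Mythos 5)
Your proposal is correct and follows essentially the same route as the paper: holomorphic extension of the solution map to a complex neighbourhood of $\theta(\mathcal{P})$, a finite polydisc cover, and truncated power series whose tails decay geometrically in the truncation order $n$ while the number of retained coefficients grows only like $n^Q$. The only cosmetic differences are that you establish the holomorphic extension directly by a Neumann-series inversion argument (possible here because the equation is linear in $u$), whereas the paper invokes the complex Banach-space implicit function theorem following Cohen--DeVore (a route that also covers nonlinear problems), and that you bound the Taylor tail via Cauchy estimates rather than via absolute convergence on a polydisc of doubled radius --- these are equivalent.
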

\begin{proof}
Let $A_q: V \to V^\prime$ be the operators induced by
$a_q$, i.e.\ $A_q(v)[w]:=a_q(v, w)$.
By complex linearity, we extend these operators to continuous linear 
operators $A_q^\mathbb{C}: V^\mathbb{C} \to (V^\prime)^{\mathbb{C}} \cong
(V^\mathbb{C})^\prime$ between the complexification of $V$ and its dual.
Obviously, the (bilinear) mapping $\Psi: V^\mathbb{C} \times \mathbb{C}^Q \to
(V^\mathbb{C})^\prime$, $\Psi(v, c) := \sum_{q=1}^Q c_q A^\mathbb{C}_q(v)$
is holomorphic in the sense that it has a continuous, complex linear Fr\'echet derivative.
Moreover, $\partial_v\Psi(v, c) = \sum_{q=1}^Q c_q A^\mathbb{C}_q$ is, due to the coercivity of $a_\mu$, 
invertible for each $c \in \{(\theta_1(\mu), \ldots, \theta_Q(\mu)\ |\ \mu \in \mathcal{P}\} =: \hat{ \mathcal{P}}$.
Following~\cite{CohenDeVore2014}, we use the complex Banach space version of the implicit function 
theorem to deduce that $\hat{\Phi}: \hat{ \mathcal{P}} \to V^\mathbb{C}$,
$\hat{ \Phi}(\theta_1(\mu), \ldots, \theta_Q(\mu)) := \Phi(\mu)$ can be holomorphically
extended to an open neighbourhood $\hat{ \mathcal{P}} \subseteq \mathcal{O} \subseteq  \mathbb{C}^Q$. 

By compactness of $\hat{\mathcal{P}}$, there are finitely many $c_1, \ldots, c_M \in \hat{\mathcal{P}}$ and radii $r_1, \ldots, r_M$
such that $\hat{\mathcal{P}} \subset \bigcup_{m=1}^{M} D(c_m, r_m)$ and $\bigcup_{m=1}^M D(c_m, 2r_m) \subseteq \mathcal{O}$, where
$D(c, r) := \{z \in \mathbb{C}^Q\ |\ |z_q - c_q| < r,\, 1\leq q \leq Q\}$.
Holomorphy implies analyticity, thus there are for each $1 \leq m \leq M$ and
each multi-index $\alpha \in \mathbb{N}_0^Q$ vectors $v_{m, \alpha} \in V^\mathbb{C}$ such that $ \hat{\Phi}(z) = \sum_\alpha (z - c_m)^\alpha v_{m, \alpha}$,
converging absolutely for each $z \in D(c_m, 2r_m)$.
It is easy to see that, in fact, $v_{m, \alpha} \in V$.
Moreover, we have
\begin{equation*}
	C := \max_{1 \leq m \leq M} \sup_{z \in D(c_m, r_m)} \Bigl\|\,\sum_{\alpha} 2^\alpha(z - c_m)^\alpha v_{m, \alpha} \,\Bigr\| < \infty.
\end{equation*}
Note that there are $\frac{(Q+K)!}{Q!K!} \leq K^Q$ multi-indices $\alpha$ of length $Q$ and maximum degree $K$.
Let $K_N := \lfloor (M^{-1}N)^{1/Q} \rfloor$, and define
\begin{equation*}
	V_N := \Span\{v_{m, \alpha} \ |\ 1 \leq m \leq M, |\alpha| \leq K_N\}
	\subseteq V.
\end{equation*}

Now, for an arbitrary $\mu \in \mathcal{P}$ we can approximate $\Phi(\mu)= \hat{\Phi}(z)$, $z \in D(c_m, r_m)$,
by the truncated power series $\Phi_N(\mu) := \sum_{|\alpha| \leq K_N} \alpha(z - c_m)^\alpha v_{m, \alpha} \in V_N$.
We then obtain 
\begin{align*}
\|\Phi(\mu) - \Phi_N(\mu)\| &\leq \Bigl\|\,\sum_{|\alpha| \geq K_N + 1}
2^{-\alpha} \cdot 2^\alpha(z - c_m)^\alpha v_{m, \alpha}\, \Bigr\| \\
                            & \leq C 2^{-(K_N + 1)} \leq C e^{- \operatorname{ln}(2)M^{-1/Q}N^{1/Q}}.
\end{align*}
\end{proof}

Note, that such type of estimate will degenerate for $Q \to \infty$.
On the other hand, we can replace $Q$ by $P$ whenever the parameter functionals
$\theta_q$ are analytic.
In fact, we needed the affine decomposition $(\ref{eq:affine_decomposition})$ of
$a_\mu$ only to establish the analyticity of $\Phi$.
The implicit function theorem argument from~\cite{CohenDeVore2014} can be
applied to various other problem classes, for which, therefore, the same type of
result holds.

Algebraic convergence rates for infinite affine decompositions where the coefficients
satisfy some summability condition are shown in \cite{CohenDevoreEtAl2011}. 

\subsection{Definition of $\Phi_N$}
Assuming a reduced subspace $V_N$ has already been constructed, we determine the
RB solution $\Phi_N(\mu):=u_{N, \mu} \in V_N$ via Galerkin projection of the original
equation as the solution of
\begin{equation}\label{eq:reduced_problem}
	a_\mu(u_{N, \mu}, v_N) = f(v_N) \qquad\forall v_N\in V.
\end{equation}
As usual, C\'ea's Lemma gives use the following quasi-optimality estimate for the
model reduction error:
\begin{equation}\label{eq:cea_lemma}
	\|\Phi(\mu) - \Phi_N(\mu)\| \leq \frac{\|a_\mu\|}{C_{a_\mu}}
	  \inf_{v_N \in V_N} \|\Phi(\mu) - v_N\|.
\end{equation}
Note that if we pre-compute the matrices of the bilinear forms $a_q$ and the
coefficients of $f$ and $s$ w.r.t.\ to a basis of $V_N$,
computing $s \circ \Phi_N(\mu)$ will require only $\mathcal{O}(QN^2)$ operations
for system matrix assembly, $\mathcal{O}(N^3)$ operations for the solution of
the reduced system and $\mathcal{O}(SN)$ operations for the evaluation of the
output during the online phase.
No operations involving the space $V$ need to be performed.

\subsection{Error control}
We use a standard residual-based error estimator to bound the model reduction
error.
Let the reduced residual be given by $\mathcal{R}_\mu(u_{N, \mu})[w]:= f(w) -
a_\mu(u_{N, \mu}, w)$ for  $w \in V$.
The well-known residual-error relation
$\mathcal{R}_\mu(u_{N, \mu})[v] = a_\mu(u_\mu - u_{N, \mu}, v)$
yields together with the coercivity of $a_\mu$:
\begin{equation}\label{eq:error_estimator}
\|\Phi(\mu) - \Phi_N(\mu)\| \leq \frac{1}{C_{a_\mu}}
\|\mathcal{R}_\mu(u_{N,\mu})\|_{V^\prime} \leq \frac{\|a_\mu\|}{C_{a_\mu}} \|\Phi(\mu) 
-\Phi_N(\mu)\|.
\end{equation}
Thus, $1/C_{a_\mu}\cdot \|\mathcal{R}_\mu(u_{N,\mu})\|_{V^\prime}$ is a guaranteed upper
bound for the model reduction error with effectivity $\|a_\mu\| / C_{a_\mu}$.
An upper bound for the output error
is then given by
\begin{equation*}
\|s\circ\Phi(\mu) - s\circ\Phi_N(\mu)\| \leq \frac{\|s\|}{C_{a_\mu}}
\|\mathcal{R}_\mu(u_{N,\mu})\|_{V^\prime}.
\end{equation*}
This upper bound and the output approximation itself can be further improved using a
\emph{primal-dual} approximation approach (e.g.~\cite{QuarteroniRozzaEtAl2011}).

Note that since $V^\prime$ is a Hilbert space, we have
\begin{equation*}
\|\mathcal{R}_\mu(\ast)\|^2 = (f - a_\mu(\ast, \cdot),\ f - a_\mu(\ast,
\cdot))_{V^\prime}.
\end{equation*}
Pre-computing all appearing scalar products w.r.t.\ the affine decomposition of
$a_\mu$ and a basis of $V_N$,
this residual norm can be evaluated efficiently online with $\mathcal{O}(Q^2N^2)$
operations.
Again, no operations involving the space $V$ are required.

For the complete evaluation of (\ref{eq:error_estimator}), the coercivity constant
$C_{a_\mu}$, or a lower bound for it, must be known.
In many cases, good lower bounds for the problem at hand are known a priori.
If not, the \emph{successive constraint method} \cite{HuynhRozzaEtAl2007} is an
well-established approach to compute such lower bounds online for arbitrary
$\mu\in\mathcal{P}$ using offline pre-computed lower bounds for
$C_{a_{\mu_i}}$ for certain well-chosen $\mu_i$.

\subsection{Construction of $V_N$}
A natural approach for the construction of approximation spaces $V_N$ for
$\im(\Phi)$ during the offline phase is to iteratively enlarge the reduced space by
an element of $\im(\Phi)$ which maximizes the best-approximation error for
the current reduced space. 
Such \emph{greedy} algorithms have been studied extensively in approximation
theory.
While it is clear that greedy algorithms will not produce best-approximating
spaces for the solution manifold\footnote{E.g., let $\mathcal{M}:=\{[1~0], [0~1]\} 
	\subset \mathbb{R}^2$. Then $d_{V_1}(\mathcal{M}) = 1$ for a $V_1$ generated by
	a greedy algorithm, whereas $d_1(\mathcal{M}) = 1/\sqrt{2}$.},
several quasi best-approximation results have been derived in the literature.
For their analysis in the context of RB methods see
\cite{BinevCohenEtAl2011, BuffaMadayEtAl2012, DeVorePetrovaEtAl2013}.
In particular, the following has been shown in \cite{DeVorePetrovaEtAl2013}:
We call $u_1, \ldots, u_N \in \im(\Phi)$ a \emph{weak greedy} sequence for $\im(\Phi)$
if there is a $\gamma > 0$ s.t.
\begin{equation*}
	\sup_{v \in V_{n-1}} \|u_n - v\| \geq \gamma\cdot d_{V_{n-1}}(\im(\Phi)),
	\qquad V_n := \Span\{v_1, \ldots V_{n}\}, \qquad 1\leq n \leq N,
\end{equation*}
with $V_0 := \{0\}$.
Now, if $d_N(\im(\Phi)) \leq Ce^{-cN^\alpha}$ for all N and the spaces $V_N$
have been constructed from a weak greedy sequence with parameter $\gamma$, then
$d_{V_N}(\im(\Phi)) \leq \sqrt{2C}\gamma^{-1}e^{-c^\prime N^\alpha}$,
where $c^\prime = 2^{-1-2\alpha}c$. Similar results have been obtained for
algebraic convergence of $d_n(\im(\Phi))$.

A weak greedy sequence for $\im(\Phi)$ can be constructed using the error
estimator (\ref{eq:error_estimator}) as a surrogate for the best-approximation
error in $V_N$: in each iteration, we extend the reduced space by a
$\Phi(\mu)$ where $\mu$ is a maximizer of the estimated model reduction error.
Due to the effectivity estimate (\ref{eq:error_estimator}) and C\'ea's Lemma
(\ref{eq:cea_lemma}), one can easily see that this, indeed, yields a weak greedy
sequence with parameter $\gamma = \inf_{\mu \in \mathcal{P}}(\|a_\mu\|/C_{a_\mu})^{-2}$.

\subsection{Implementation and the notion of truth}
While everything we have discussed so for applies to arbitrary (possibly) infinite
dimensional Hilbert spaces $V$, the actual implementation of the RB method
will only be possible when $V$ is finite dimensional.
In practice, the original analytical problem, posed on some infinite function space $V$, is
therefore replaced by a discrete approximation that is so highly resolved that
the discretization error is negligible w.r.t.\ the model reduction error.
In the literature, this approximation is often referred to as the \emph{truth}
approximation.

Typically, computing the truth approximation for a single parameter $\mu$ will
be computationally expensive (which is why model reduction is desired).
However, such computations only need to be performed in the offline
phase of the scheme and only to compute basis vectors (and the associated reduced
model) for $V_N$.
In particular, thanks to the usage of the online-efficient error estimator
(\ref{eq:error_estimator}) to select the next parameter for the extension of
$V_N$, no high-dimensional operations are needed to find this
parameter.
Note that the typically infinite parameter space $\mathcal{P}$
will still have to be replaced by a finite training set $\mathcal{S}_{train} \subseteq
\mathcal{P}$ to make the search for this parameter feasible.
However, as the error estimator can be evaluated very quickly, very large training
sets that densely sample $\mathcal{P}$ are tractable. 
Moreover, adaptive algorithms are available (e.g.~\cite{HaasdonkDihlmannEtAl2011}), to
refine $\mathcal{S}_{train}$ only where needed.

Recently, new approaches~\cite{Yano2014, OhlbergerSchindler2015, AliSteihEtAl2014}
have appeared which provide online
efficient estimators that measure the model reduction error w.r.t.\ the
analytical solution of the given problem. 
Such approaches not only allow to certify that the reduced solution has a
guaranteed approximation quality w.r.t.\ the PDE model one is interested in,
but also enable adaptive methods for the on-demand refinement of the truth approximation.

\section{Extensions}
\label{sec:extensions}
We have seen in the previous section that for linear, coercive, affinely
decomposed problems, the RB approach yields low-dimensional, quickly solvable
reduced order models with (sub-)exponentially fast decaying error, which can be
rigorously bound using an efficient a posteriori error estimator.
Based on these fundamental ideas, extensions of the methodology to a wide array
of problem classes have been proposed.
We can only mention a few important ideas.

\subsection{Time-dependent problems}
In the method of lines approach, (time-dependent) parabolic partial differential
equations are first approximated by replacing the space differential operators
of the equation by an appropriate discrete approximation, yielding an ordinary
differential equation system in time, which is then solved using standard ODE
time stepping methods.
The same approach can be applied in the RB setting.
Thus, we search for reduced spaces $V_N$ which approximate the solution
trajectories of the given problem for each parameter $\mu$ and point in time
$t$. 
I.e.\ $d_{V_N}(\mathcal{M}_\Phi^t)$, where $\mathcal{M}_\Phi^t:=\{\Phi(\mu)[t]\
|\ \mu \in \mathcal{P}, t \in [0, T]\}$, should be as small as possible.

Since errors propagate through time, it is easy to conceive that greedily
selecting a $\Phi(\mu)[t]$ which maximizes the model reduction error will not
yield good results.
A better approach is to first select a maximum error trajectory $\Phi(\mu^*)$
and then add the first modes of a proper orthogonal decomposition (POD) of the
projection error of $\Phi(\mu^*)$ onto $V_N$ to $V_N$ (\textsc{POD-Greedy},
\cite{HaasdonkOhlberger2008})\footnote{I.e., one computes the truncated
	singular value decomposition of the linear mapping $\mathbb{R}^N \to V$,
	$n \mapsto (I - P_{V_N})\Phi(\mu^*)[t_n]$, where $t_0, \ldots, t_N$ is some
time discretization of $[0, T]$.}.
In~\cite{Haasdonk2013} it was shown that similar to the stationary case, the
\textsc{POD-Greedy} algorithm yields quasi-optimal convergence rates, e.g.\ in
the sense that (sub-)exponential decay of the $N$-widths of $\mathcal{M}_\Phi^t$ carries
over to the decay of $d_{V_N}(\mathcal{M}_\Phi^t)$.
As in the classical finite element setting, a posteriori error estimators for
the reduction error can be obtained by time integration of the error-residual
relation.

These error estimators, however, show bad long-time effectivity, in particular
for singularly perturbed or non-coercive (see below) problems. 
To mitigate this problems, space-time variational formulations for the reduced
order model, which allow for tighter error bounds, have been considered (e.g.\ 
\cite{UrbanPatera2012, Yano2014a}).

\subsection{Inf-sup stable problems}
A crucial prerequisite for the applicability of Galerkin projection-based model
order reduction is a manageable condition of the problem. I.e.\ the quotient
$\kappa_\mu := \|a_\mu\| / C_{a_\mu}$ has to be of modest size,
as it determines the quality of the reduced solution (\ref{eq:cea_lemma}).
While $\kappa_\mu$ has no significant effect on the asymptotic behaviour of RB
methods, a too large $\kappa_\mu$ can render the RB approach practically
infeasible.

Typical examples include advection diffusion equations with small diffusivity
or, as the limit case, hyperbolic equations where coercivity is completely lost.
Many of these problems are still inf-sup stable, i.e.
\begin{equation*}
\inf_{0 \neq v \in V}\sup_{0 \neq w \in V} a_\mu(v, w) / \|v\|\|w\| > 0.
\end{equation*}
Assuming that $a_\mu$ is inf-sup stable on $V_N$, a similar quasi-optimality
result to C\'ea's Lemma (\ref{eq:cea_lemma}) holds\footnote{For infinite
	dimensional $V$ we additionally need to assume non-degeneracy of $a_\mu$
	in the second variable.}.
However, contrary to coercivity, inf-sup stability is not inherited by arbitrary
subspaces $V_N$.
Petrov-Galerkin formulations, where appropriate test spaces for the reduced
variational problem (\ref{eq:reduced_problem}) are constructed, are a natural
setting to preserve the inf-sup stability of the reduced bilinear form.
Several approaches have by now appeared in the literature.
We specifically mention~\cite{DahmenPleskenEtAl2014} where, in addition,
problem adapted norms on the trial and test spaces are chosen to ensure optimal
stability of the reduced problem.
In the recent work~\cite{ZahmNouy}, stability of the reduced problem is improved
using preconditioners obtained from an online efficient interpolation scheme.

\subsection{Not affinely decomposed and nonlinear problems}
Crucial for being able to quickly evaluate $\Phi_N$ is the affine decomposition
of $a_\mu$ (\ref{eq:affine_decomposition}) which allows us to assemble the
system matrix for (\ref{eq:reduced_problem}) by linear combination of the
pre-computed, non-parametric reduced matrices of $a_q$.
For problems where such an affine decomposition is not given,
a widely adopted approach is to approximate $a_\mu$ by some $\hat{a}_\mu$
admitting an affine decomposition. $\hat{a}_\mu$ is determined using an interpolation scheme,
where the interpolation points and interpolation basis are constructed from
snapshot data for the parametric object to interpolate.
Originally, this \emph{empirical interpolation} method was introduced for parametric
data functions (appearing in the definition of
$a_\mu$)~\cite{BarraultMadayEtAl2004},
and has since then been extended to general, possibly nonlinear, 
operators~\cite{HaasdonkOhlbergerEtAl2008, ChaturantabutSorensen2010,
CarlbergBou-MoslehEtAl2011, DrohmannHaasdonkEtAl2012}.

\section{Limits of reduced basis methods}\label{sec:limits}
By now, the reduced basis methodology has matured to a point where a large body
of problems admitting rapidly decaying Kolmogorov $N$-widths can be
handled with great success.
However, many relevant problems, in particular advection dominated problems,
suffer from a very slow decay of the $N$-widths, even though the structure of
their solutions suggests that efficient reduced order models should exist.
In this section we will give a very simple example which falls into this
category of problems and briefly discuss first attempts that have been made to
tackle these problems by means of nonlinear approximation techniques.

\subsection{The need for nonlinear approximation}
A slow decay of the Kolmogorov $N$-widths can already be observed for simple
linear advection problems involving jump discontinuities:
\begin{equation}\label{eq:severe_problem}
\begin{gathered}
	\partial_t u_\mu(x, t) + \mu \cdot \partial_x u_\mu(x, t) = 0 \qquad \mu, x, t \in [0, 1]\\
	u_\mu(x, 0)  = 0,\quad u_\mu(0, t) = 1.
\end{gathered}
\end{equation}
If we choose a method of lines approach, even a single solution trajectory of
(\ref{eq:severe_problem}) cannot be well-approximated using linear spaces.
I.e.\ consider $\mathcal{M}:=\{u_1(t)\ |\ t \in [0, 1]\} \subset L^2([0,1])$.
One readily checks that for each $N \in \mathbb{N}$, $\mathcal{M}$ contains the pairwise
orthogonal functions $\psi_{N,n}$, $1 \leq n \leq N$, of norm $N^{-1/2}$ given by
\begin{equation*}
\psi_{N,n}(x):=
\begin{cases}
	1 & \frac{n-1}{N} \leq x \leq \frac{n}{N}  \\
	0 & \text{otherwise}
\end{cases}.
\end{equation*}
Thus,
\begin{align*}
	d_{N}(\mathcal{M}) &\geq d_N\bigl(\{\psi_{2N, n}\ |\ 1 \leq n \leq
	2N\}\bigr)\\ &= (2N)^{-1/2} \cdot d_N\bigl(\{(2N)^{1/2}\psi_{2N, n}\ |\ 1 \leq n \leq 2N\}\bigr).
\end{align*}
Note that the latter set can be isometrically mapped to the canonical
orthonormal basis in $\mathbb{R}^{2N}$.
Since, by definition, the Kolmogorov $N$-width is invariant under taking the
balanced convex hull, we obtain using Corollary~IV.2.11 of \cite{Pinkus1985}
\begin{align*}
	d_{N}(\mathcal{M}) &\geq (2N)^{-1/2} \cdot d_N\bigl( \{x \in
	\mathbb{R}^{2N}\ |\ \|x\|_1 \leq 1\} \bigr)\\
	&= (2N)^{-1/2} \cdot
	\left(\frac{2N - N}{2N}\right)^{1/2} = \frac{1}{2}N^{-1/2}.
\end{align*}

Note that this convergence issue is not due to the methods of line approach.
Even if we switch to a space-time formulation, treating 
(\ref{eq:severe_problem}) as a stationary equation on
$[0,1]^2$, will not solve the problem in the parametric case.
Using the same arguments, one easily sees that, still, $d_N(\{u_\mu\ |\ \mu \in [0, 1]\})
\sim N^{-1/2}$.

No matter what, classical RB methods or any other model reduction approach for
which $\Phi_N$ maps to a linear subspace $V_N \subseteq V$ are bound to fail for this type
of problem.
Only methods for which $V_N$ is a nonlinear subspace of $V$ can be successful.

Regarding application problems where the described behaviour is observed, we
specifically mention the challenging class of kinetic transport equations,
for which first model reduction attempts are presented in \cite{BrunkenOhlbergerSmetana2016}
 and in the references therein.

\subsection{First attempts}
By now, several attempts have been made to extend the RB methodology
towards nonlinear approximation.
In the following, we will briefly discuss the most important approaches we are aware of.
Most of these approaches are still in their early stages, usually only tested
for selected model problems and with little theoretical underpinning.
Nevertheless, promising first steps have been taken, and in view of the variety of
the approaches, it seems likely that substantial progress on such methods can be made
in the years to follow.

\paragraph{Dictionary-based approximation}
An obvious generalization of linear approximation in a single space $V_N$ is to
employ a dictionary $\mathcal{D}$ of linear reduced spaces from which an
appropriate $V_N \in \mathcal{D}$ is selected depending on the parameter $\mu$
or point in time for which the solution is to be approximated
\cite{HaasdonkDihlmannEtAl2011, DihlmannDrohmannEtAl2011, KaulmannHaasdonk2013}.
However, while such approaches may increase online efficiency by allowing
smaller approximation spaces, the overall number of required basis vectors is
still controlled by the Kolmogorov $N$-width:
\begin{equation*}
	\sup_{\mu \in \mathcal{P}} \min_{V_N \in \mathcal{D}} \inf_{v \in V}
	\|\Phi(\mu) - v\| \geq d_{\Span(\bigcup_{V_N
			\in \mathcal{D}} V_N)}(\im(\Phi)) 
			\geq d_{\sum_{V_N \in \mathcal{D}} \dim V_N}
			(\im(\Phi)).
\end{equation*}
Thus, to achieve an error of $\varepsilon$ for the approximation of
(\ref{eq:severe_problem}), still a total amount of $\varepsilon^{-2}$ basis
vectors has to be included in $\mathcal{D}$.
While such an approach might be feasible in one space dimension, where all
possible locations of the discontinuity can already be obtained from one
solution trajectory, offline computations in higher space dimensions will be
prohibitively expensive.

In~\cite{Carlberg2015}, an \emph{adaptive h-refinement} technique for RB
spaces is presented.
Starting from a coarse reduced basis obtained from global solution
snapshots, a hierarchy of approximation spaces can be adaptively generated on-the-fly
by dissecting the basis vectors w.r.t.\ a pre-computed hierarchy of DOF set partitions.
This approach mitigates the need for large numbers of solution snapshots 
while allowing arbitrarily accurate approximation spaces, albeit at an
increased computational effort online.

\paragraph{Shock detection}
Another approach, which is geared specifically towards treating moving
discontinuities is to detect the space-time regions with shocks or low
regularity and use low-dimensional linear approximation spaces only outside
these regions. 
In~\cite{ConstantineIaccarino2012}, first an interpolation method in parameter
space is used to obtain a reduced solution. The Jacobian of the interpolant is
then used to detect non-smooth space-time regions in which then a finely
resolved correction is computed.

In~\cite{TaddeiPerottoEtAl2015}, a more elaborate shock capturing algorithm is
developed to obtain an online efficient approximation of the trajectory $x_s(t)$
of the discontinuity location over time.
This information is then used to transform the space-time domain into three
parts (before discontinuity appears, left and right of discontinuity)
which are transformed to reference domains.
On these reference domains, empirical interpolation is finally used to
obtain a low-order approximation of the smooth solution components.
Since the values of the transformed solution components need only to be
known at the given space-time interpolation points, these values can be quickly
computed using the methods of characteristics.  

To our knowledge, these methods have not been successfully applied in higher space
dimensions yet.

\paragraph{Nonlinear parametrization}
A more generic approximation approach is to describe nonlinear approximation
spaces $V_N$ by a nonlinear parametrization.
For advection driven problems, a natural choice is to incorporate
transformations of the underlying spatial domain (shifts, rotations or more
general transformations) into the parametrization.

In~\cite{OhlbergerRave2013}, these transformations are assumed to be given
by a Lie group $G$ of mappings acting on $V$.
The reduced solution manifold $V_N$ is then given by all vectors $g.v$ where 
$g \in G$ accounts for the dynamics of the solution and $v \in
\hat{V}_N$ describes the (ideally) stationary shape of the solution.
This \emph{ansatz} is then substituted into the given differential equation,
and the algebraic constraint that the evolution of $v(t)$ should be orthogonal
to the action of the Lie algebra of $G$ at $v(t)$, is added to determine
the additional degrees of freedom.
Given the invariance of the problem under the action of $G$, standard RB
techniques for approximating $v(t) \in \hat{V}_N$ yield an online efficient
reduced order approximation of the resulting frozen equation system.

In~\cite{Welper2015}, a parameter space interpolation scheme is developed where
$u_\mu \in V$ is approximated by an expression of the form
$$u_{\mu, N}(x) = \sum_{\eta \in \mathcal{P}_N} l_\eta(\mu)u_\eta(\phi(\mu,
\eta)(x)),$$ where $l_\eta$ are Lagrange interpolation polynomials associated with
the interpolation points $\eta$ and $u_\eta \in V$ are solutions snapshots which
are transformed via a mapping $\varphi: \mathcal{P} \times \mathcal{P} \times
\Omega \to \Omega$.
An optimization algorithm w.r.t.\ a training set of solution snapshots is
used to determine $\phi$ during the offline phase. 

Low-order approximations of advection dominated trajectories of the
form $$u(x, t) \approx [u_0(Y(x,t)) + R(Y(x,t),t)]\det(\nabla_x Y(x,t))$$ are considered in
\cite{IolloLombardi2014}.
While standard POD is used to approximate the residual part $R(x, t)$ of the
trajectory, the transformation $Y$ is approximated by a principal component
analysis based on the Wasserstein distances between the snapshots $u(x, t_i)$,
with modes being obtained by solving Monge-Kantorovich optimal transport problems
w.r.t.\ the reference mode $u_0(x)$.

\paragraph{Approximation based on Lax pairs}
Finally, we mention a new model reduction approach based on the use of so called
Lax pairs in the theory of integrable systems~\cite{GerbeauLombardi2014}.
Given a solution trajectory $u(t)$ of an evolution equation, the associated Schr\"odinger
operator with potential $-\chi u$ at time $t$ is given by $\mathcal{L}_{\chi u}(t)\varphi
= - \Delta \varphi - \chi u(t) \varphi$.
With $\lambda_m(t), \varphi_m(t)$ denoting the $m$-th eigenvalue (eigenvector) of
$\mathcal{L}_{\chi u}(t)$, there are operators $\mathcal{M}(t)$ such that
$\partial_t\varphi_m(t) = \mathcal{M}(t)\varphi_m(t)$.
One then has
\begin{equation}\label{eq:approximate_lax_pair}
(\mathcal{L}_{\chi u}(t) + [\mathcal{L}_{\chi u}(t), \mathcal{M}(t)])
\varphi_m(t) = \partial_t \lambda_m(t) \varphi_m (t),
\end{equation}
where $[\mathcal{A}, \mathcal{B}] = \mathcal{A}\mathcal{B} -
\mathcal{B}\mathcal{A}$.\footnote{$(\mathcal{L}_{\chi u}, \mathcal{M})$ is called a Lax pair when the
right-hand side of (\ref{eq:approximate_lax_pair}) vanishes for all $m$, i.e.\
the eigenvalues of $\mathcal{L}_{\chi u}$ are constant.}
Using the $\varphi_m$ as a moving coordinate frame which is truncated to the
first $N$ eigenvectors, the authors deduce from (\ref{eq:approximate_lax_pair}) a reduced ordinary
differential equation system which describes the evolution of the coordinates of
the reduced approximation of $u(t)$ w.r.t.\ this coordinate frame.

\bibliographystyle{siam}
\bibliography{bib}
 
\end{document}